\documentclass[anon]{opt2023}
\usepackage{ulem}
\usepackage{tcolorbox}

\title[AGD: A guaranteed bound for a heuristic restart strategy.]{Accelerated gradient descent: A guaranteed bound\\ for a heuristic restart strategy}

\author{\qquad \qquad \qquad
	Walaa M.\ Moursi\thanks{
		Department of Combinatorics and Optimization, 
		University of Waterloo,
		Waterloo, Ontario N2L~3G1, Canada.	
		E-mail: \texttt{walaa.moursi@uwaterloo.ca}}
	\qquad
	Viktor Pavlovic\thanks{
		Department of Combinatorics and Optimization, 
		University of Waterloo,
		Waterloo, Ontario N2L~3G1, Canada.
		E-mail: \texttt{v2pavlov@uwaterloo.ca}}
	\qquad
	Stephen A. Vavasis\thanks{
		Department of Combinatorics and Optimization, 
		University of Waterloo,
		Waterloo, Ontario N2L~3G1, Canada. E-mail:
		\texttt{vavasis@uwaterloo.ca}.}
}

\newcommand{\knn}{\ensuremath{{k\in{\mathbb N}}}}

\DeclareMathOperator*{\Argmin}{Argmin}
\newcommand{\scal}[2]{\left\langle{#1},{#2}  \right\rangle}
\providecommand{\siff}{\Leftrightarrow}
\newcommand{\Id}{\ensuremath{\operatorname{Id}}}
\providecommand{\abs}[1]{\lvert#1\rvert}
\RestyleAlgo{ruled}
\crefname{equation}{}{equations}
\crefname{item}{}{items}

\begin{document}
	
	\maketitle
	
	\begin{abstract}%
		The $\mathcal{O}(1/k^2)$ convergence rate in function value of accelerated gradient descent is optimal, but there are many modifications that have been used to speed up convergence in practice. Among these modifications are restarts, that is, starting the algorithm with the current iteration being considered as the initial point. We focus on the adaptive restart techniques introduced by O'Donoghue and Cand\`es, specifically their gradient restart strategy. While the gradient restart strategy is a heuristic in general, 
		we prove that applying gradient restarts 
		preserves and in fact improves the $\mathcal{O}(1/k^2)$ bound,
		hence establishing function value convergence, for one-dimensional functions.
		Applications of our results to separable and nearly separable functions are presented.
	\end{abstract}

	\begin{keywords}%
		convex function, convex optimization, gradient descent, Lipschitz gradient, Nestrov acceleration, restarts
	\end{keywords}
	
	{\small{\bfseries 2010 Mathematics Subject Classification:}
		{Primary 
			90C25, 
			65K05; 
			Secondary 
			65K10, 
			49M27. }
	}
	
	\section{Introduction}
	In 1983, Nesterov introduced acceleration to the classical gradient descent method  \cite{Nesterov1983AMF}. This acceleration method involves the addition of an extrapolation based on previous iterates, the new iterate is then computed as a classical gradient step from this extrapolation. Accelerated gradient descent (AGD) is also known as gradient descent with momentum. This is because the extrapolation step is computed using an increasing sequence of scalars. Due to this added momentum AGD is a non-monotonic method as extrapolated points can overshoot the minimizer and cause an increase in function value.\\
	\\
	Several restarting algorithms have been proposed recently as discussed in \cref{sec:sec21}. Restarting means that the momentum term is set to its initial value of zero, and the current iteration is used as the new starting point. In effect, this deletes the memory used previously to compute the extrapolation steps.\\
	\\
	O'Donoghue and Cand\`es introduced an adaptive restart strategy that does not depend on any properties of the objective function \cite{O'Donoghue2015Jun}. The authors offer two heuristic schemes to restart AGD. The first is a function value scheme that restarts whenever non-monotonicity is detected, and the second restarts when the gradient makes an obtuse angle with the direction of the iterates. The latter scheme uses only already computed information. The authors suggest that the gradient based scheme is favorable as it is more numerically stable. While both schemes perform well in practice and tend to drastically improve the performance of AGD, unlike AGD, there is no proof of function value convergence for the restarted scheme. The authors have provided an analysis when the objective is quadratic and suggest that this analysis carries over to a quadratic region around the minimizer. There have however been examples, such as in the work by Hinder and Lubin \cite{hinder2020generic} who give a simple function for which the restarts drastically underperform until the iterates get close to the minimizer \cite{hinder2020generic}.
	Recall that 
	a function $f:\R^n \rightarrow \R$
	is \textit{L-smooth} if it has Lipschitz continuous gradient, i.e.,  for all $x$ and 
	$y $ in $ \R^n$ we have,
	\begin{equation}
		\norm{\gf{x} -\gf{y}} \leq L\norm{x-y}.
	\end{equation}
	Throughout the remainder
	of this paper, we assume that
	\begin{equation}
		\label{e:f:assmp}
		\text{$f:\R^n \rightarrow \R$ 
			is convex and $L$-smooth, where $L>0$,
			and $S:=\Argmin f\neq \varnothing$ }
	\end{equation}
	and we set 
	\begin{equation}
		\label{e:Smu:assmp}
		\text{$f^*:=\min f(\R^n)$.}
	\end{equation}
	In this paper we
	provide analysis for the case $n=1$. Our results, primarily \cref{theorem:1Re} and \cref{prop:mRe} below, reveal that applying the gradient restart improves
	the classical right-hand side bound of AGD and hence preserves the 
	function value convergence
	of the iterates of the restarted scheme. Unlike the O'Donoghue and Cand\`es restart scheme, our analysis varies
	in that if the restart condition
	(see \cref{eqn:1dGrad} below) is satisfied then we restart at 
	$x_{k+1}$ rather than $x_k$. Computational experiments (see Appendix~\ref{sec:D} and Appendix~\ref{sec:E}) suggest that in practice restarting from $x_{k+1}$ offers slightly better practical performance for $n\ge 1$.

	\section{Framework and Related Works}\label{sec:Sec2}
	Recalling \cref{e:f:assmp},
	our framework for AGD is that of an unconstrained convex optimization problem
	
	\begin{equation}
		\min_{x \in \R^n} f(x).
	\end{equation}
	Let $(t_k)_{\knn}$ be a sequence of scalars such that $t_0 =1$
	and $(\forall k\ge 1)$ (see, e.g., \cite{FISTA})
	\begin{equation}\label{eqn:FISTAseqA}
		t_k \geq \tfrac{k+2}{2} \geq 1 =t_0
		\;\text{and}\;  t_k^2 \geq t^2_{k+1} - t_{k+1}.
	\end{equation}
	Two popular choices for the sequence of $t_k$ are given by,
	\begin{equation}
		t_{k+1} = \tfrac{1 + \sqrt{1 + 4t_k^2}}{2}, \; \; \; t_k = \tfrac{k+2}{2}.
		\label{eq:tchoice}
	\end{equation}
	Let $x_0 \in \R^n$,
	and set $y_0 = x_0$.
	Throughout this paper we set
	\begin{equation}
		T = {\Id - \tfrac{1}{L}\nabla f}.
	\end{equation}
	
	Classical AGD updates $x_0$ 
	$(\forall k\in \mathbb{N})$ as follows:
	\begin{subequations}
		\label{e:classicAGD}
		\begin{align}
			x_{k+1}&
			:= T(y_k)
			\\
			y_{k+1} &:= x_{k+1} + \tfrac{t_k -1}{t_{k+1}}\left(x_{k+1}-x_k\right).
		\end{align}
	\end{subequations}
	We set 
	\begin{equation}
		x^*:=P_{S}(x_0)\in S,
	\end{equation}
	where $P_S$ denotes the orthogonal projection 
	(this is also known as the closest point mapping)
	onto the set $S$ which is convex, closed, and by assumption (see \cref{e:f:assmp}) nonempty.
	
	Note that the classical upper bound for AGD is $(\forall \knn)$
	\begin{equation}
		f(x_k) - f^* \leq \frac{2L\norm{x_0-x^*}^2}{(k+1)^2}.
	\end{equation}
	We now recall the following useful fact which will be used later.
	\begin{fact}{\rm\bf (see \cite[Theorem~10.16]{BeckFirstOrder})}
		\label{fact:1}
		Let $x \in \R^n$ and let  $y\in \R^n$. Then
		\begin{equation}
			f(x) - f(T(y)) \geq \tfrac{L}{2}\norm{x - T(y)}^2 - \tfrac{L}{2}\norm{x-y}^2.
		\end{equation}
	\end{fact}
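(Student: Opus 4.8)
The plan is to combine the two standard consequences of our standing assumption that $f$ is convex and $L$-smooth: the descent lemma coming from $L$-smoothness and the gradient inequality coming from convexity. First I would record that $L$-smoothness yields the quadratic upper bound
\[
f(T(y)) \leq f(y) + \scal{\nabla f(y)}{T(y)-y} + \tfrac{L}{2}\norm{T(y)-y}^2,
\]
which is the usual descent lemma obtained by integrating the Lipschitz estimate on $\nabla f$ along the segment from $y$ to $T(y)$. Next I would invoke convexity in the form $f(y) \leq f(x) + \scal{\nabla f(y)}{y-x}$ and substitute it into the right-hand side, collapsing the two inner products into the single term $\scal{\nabla f(y)}{T(y)-x}$.

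The key structural observation is that the definition $T = \Id - \tfrac{1}{L}\nabla f$ gives the identity $\nabla f(y) = L\,(y - T(y))$. Substituting this rewrites the bound as
\[
f(T(y)) \leq f(x) + L\,\scal{y-T(y)}{T(y)-x} + \tfrac{L}{2}\norm{y-T(y)}^2,
\]
an inequality whose right-hand side is now expressed entirely in terms of the three points $x$, $y$, and $T(y)$.

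The final step is purely algebraic. Writing $a = x - T(y)$ and $b = x - y$, so that $y - T(y) = a - b$ and $T(y) - x = -a$, the three-point expression $L\,\scal{y-T(y)}{T(y)-x} + \tfrac{L}{2}\norm{y-T(y)}^2$ simplifies, via the polarization identity, to $\tfrac{L}{2}\norm{x-y}^2 - \tfrac{L}{2}\norm{x-T(y)}^2$. Transposing $f(T(y))$ then produces exactly the claimed inequality $f(x) - f(T(y)) \geq \tfrac{L}{2}\norm{x-T(y)}^2 - \tfrac{L}{2}\norm{x-y}^2$.

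I do not anticipate a genuine obstacle, since this is the textbook three-point lemma underlying FISTA-type analyses; the only place to be careful is the bookkeeping in the last step, where one must complete the square correctly so that the cross term and the $\norm{y-T(y)}^2$ term recombine into the difference of the two squared distances to $x$. In particular it is worth double-checking the signs and verifying that the coefficient $\tfrac{L}{2}$ emerges cleanly, which is precisely what the choice of stepsize $1/L$ built into $T$ guarantees.
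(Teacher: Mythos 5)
Your proof is correct: the chain descent lemma $\rightarrow$ gradient inequality at $y$ $\rightarrow$ substitution of $\nabla f(y) = L(y - T(y))$ $\rightarrow$ completing the square does yield exactly the stated inequality, and the algebra in your final step checks out. The paper offers no proof of its own here, citing Beck's Theorem 10.16 instead; your argument is precisely that textbook proof (the fundamental prox-grad inequality specialized to $g \equiv 0$ with stepsize $1/L$), so there is nothing to compare beyond noting the match.
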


	\subsection{Restarts}\label{sec:sec21}
	We will focus on adaptive restart strategies. These strategies restart the algorithm according to whether some condition is satisfied at the current iteration. 
	In \cite{O'Donoghue2015Jun}
	O'Donoghue and Cand\`es  proposed 
	two  conditions  for restarts:  the \textit{function value} and \textit{gradient based} conditions. 
	The function value condition imposes 
	restarts whenever 
	\begin{equation}
		f(x_{k+1}) > f(x_k).
	\end{equation}
	On the other hand, the gradient based condition imposes 
	restarts whenever 
	\begin{equation}
		\label{e:gradrescond}
		\scal{\gf{y_k}}{x_{k+1}-x_k} > 0.
	\end{equation}
	This indicates that there is a disagreement in the direction the iterates should go in, and thus it would be good to restart. The function scheme requires evaluating the objective at $x_k$ which may be expensive, while the gradient based scheme requires no new computations whatsoever. \\
	\\
	Giselsson and Boyd introduced different adaptive restart schemes \cite{giselsson2014monotonicity}. Their framework considers the  composite model where the objective function
	is of the form $F = f+g$ where   $g$ is convex, lower semicontinuous, and proper. This is the setup for FISTA \cite{FISTA}. Note that AGD can be seen as a special case of FISTA when we set $g \equiv 0.$ They suggest that non-monotonicity of function values is a good indicator of when to restart, and provide a new condition which implies non-monotonicity, namely 
	\begin{equation}
		\scal{y_{k-1} - x_k}{x_{k+1}-\tfrac{1}{2}(x_k + y_{k-1})}>0.
	\end{equation}
	In addition to this new restart test, they provide a convergence rate to a modified version of the restarted AGD algorithm. A notable difference 
	between their restart scheme
	and the ones introduced in \cite{O'Donoghue2015Jun}
	is that they do not 
	reset the parameter sequence of $t_k$'s. 
	This allowed  the authors to deduce an $O(1/k^2)$ convergence rate 
	similar to classical AGD.\\
	\\
	Another common assumption is that of quadratic growth \cite{Necoara2019May}, \cite{GradBasedFISTARe},  \cite{QuadGrowthFeroq} and \cite{aujol2023parameterfree}.  
	We say that 
	$f$ satisfies 
	a
	quadratic growth condition if
	there exists $\mu>0$ such that 
	\begin{equation}
		f(x) - f^* \geq \frac{\mu}{2}\norm{x-P_S(x)}^2.
	\end{equation}
	Necoara, Nesterov, and Glineur provide a fixed restart strategy for such functions \cite{Necoara2019May}. They derive the optimal restart frequency based on the knowledge of the parameter $\mu.$\\
	\\
	The growth parameter $\mu$ is rarely known exactly, and thus many restart strategies for this class of functions work by estimating it. In Feroq and Qu \cite{QuadGrowthFeroq}, the authors show that their adaptive restart strategy which requires an estimation of the growth parameter, satisfies a linear convergence rate. In Aujol et al. \cite{aujol2023parameterfree} the authors use restarting and adaptive backtracking to estimate both $L$ and $\mu$ and prove a linear convergence rate without any prior knowledge of the actual values of the parameters, thus they are able to run FISTA without any parameters.\\
	\\
	Hinder and Lubin provide another adaptive restart strategy based on a potential function that is always computable  \cite[Theorem~3]{hinder2020generic}. Their analysis requires the assumption  that $f$ is strongly convex, and this assumption cannot be reduced to quadratic growth. According to their computational experiments,
	their restart method is comparable to the heuristic methods in  \cite{O'Donoghue2015Jun} in general.
	However, they provided an example 
	of a strongly convex and $L-$smooth separable function on $\R^n$
	where the gradient based restarts \cref{e:gradrescond} 
	performed worse than classical AGD
	while their proposed restart scheme
	performed better.
	In passing, we point out that
	our analysis identifies the drawback of applying restart condition \cref{e:gradrescond} in this case.
	As a consequence of our analysis, we show that for separable functions the condition
	\cref{e:gradrescond} should be applied in a separable way by checking \cref{e:gradrescond} for each coordinate.

	\section{Contributions}
	Recalling \cref{e:f:assmp}, we 
	provide analysis for the case $n=1$.
	We focus only on the gradient restart condition \cref{e:gradrescond}.
	In the one-dimensional case
	this condition boils down to
	\begin{equation}\label{eqn:1dGrad}
		f^\prime(y_k) (x_{k+1}-x_k) >0.
	\end{equation}
	Observe that if for some 
	$k \in  \{0,1,2\dots\}$ we had 
	$f^\prime(x_k) =0$, then $x_k$ is a minimizer and we stop the algorithm. 
	\begin{remark}
		\label{rem:ourres}
		In passing we point out that 
		in our scheme,
		we employ the restart condition \cref{eqn:1dGrad} 
		with a slight modification to that proposed in \cite{O'Donoghue2015Jun}.
		Indeed, if \cref{eqn:1dGrad}  is satisfied
		our algorithm keeps $x_{k+1}$
		the same, it resets $y_{k+1} := x_{k+1}$ and $t_{k+1} := 1.$ Note that this is different from the scheme proposed in \cite{O'Donoghue2015Jun} which sets $x_{k+1} := x_k, y_{k+1} := x_k, t_{k+1} := 1$ whenever \cref{eqn:1dGrad} is satisfied. We have observed that our modification offers slightly better performance with the restarts (see \cref{fig:AGD_Quad}
		and \cref{fig:AGD_Huber} below).
	\end{remark}
	Our proposed restart algorithm in the one-dimensional case is given in  Algorithm~1  below.
	
	\begin{tcolorbox}
		\textbf{Algorithm~1: AGD with Restarts}
		\\
		\textbf{Input:} $x_0\in \R$, 
		$(\widetilde{t}_k)_\knn$ is a sequence of scalars that satisfies
		\cref{eqn:FISTAseqA}.
		\\
		\textbf{Initialization:} $y_0=x_0$, $({t}_k)_\knn:=(\widetilde{t}_k)_\knn$.
		\\
		\textbf{General step:}
		For $k\ge 0$ 
		update $x_k $ via 
		\begin{align*}
			x_{k+1}&
			:= T(y_k)
		\end{align*}
		If
		\begin{equation*}
			f^\prime(y_k) (x_{k+1}-x_k) \le 0
		\end{equation*}
		update $y_k $ via 
		\begin{align*}
			y_{k+1} &:= x_{k+1} + \tfrac{t_k -1}{t_{k+1}}\left(x_{k+1}-x_k\right).
		\end{align*}
		Else 
		restart as follows:
		$y_{k+1}:=x_{k+1}$, $(t_{k+m+1})_{m\in\mathbb{N}}:=(\widetilde{t}_m)_{m\in\mathbb{N}}$.
		\label{alg:AgdRE}
	\end{tcolorbox}
	
	\begin{proposition}
		\label{prop:GySign}
		Let $x_0\in \R$
		and let $(x_k)_\knn$ 
		and 
		$(y_k)_\knn$ be the sequences obtained
		by Algorithm~1. 
		Let $\knn$ 
		be such that 
		for all $\overline{k}\le k$
		the restart condition 
		\cref{eqn:1dGrad} has not been satisfied.
		Then the following hold.
		\begin{enumerate}[(i)]
			\item 
			\label{prop:GySign:i}
			If $x_k < x^*$ then 
			the restart condition in \cref{eqn:1dGrad} is satisfied at iteration $k$ if and only if $x_k <x^* < x_{k+1}.$
			\item 
			\label{prop:GySign:ii}
			If $x_k > x^*$ then 
			the restart condition in \cref{eqn:1dGrad} is satisfied at iteration $k$ if and only if $x_k >x^* > x_{k+1}.$
		\end{enumerate}
	\end{proposition}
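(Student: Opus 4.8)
The plan is to first reformulate the one-dimensional restart condition so that its sign structure becomes transparent. Since $x_{k+1}=T(y_k)=y_k-\tfrac1L f^\prime(y_k)$, we have $f^\prime(y_k)=L(y_k-x_{k+1})$, so \cref{eqn:1dGrad} is equivalent to $(y_k-x_{k+1})(x_{k+1}-x_k)>0$. Everything then reduces to tracking the signs of the two factors $y_k-x_{k+1}$ and $x_{k+1}-x_k$. The argument rests on two standard facts about the gradient step $T$: as $f$ is convex and $L$-smooth, $T$ is firmly nonexpansive, hence nondecreasing in dimension one, and $\operatorname{Fix}T=S$. In particular $T(x^*)=x^*$, so by monotonicity $y\ge x^*\Rightarrow T(y)\ge x^*$ and, contrapositively, $T(y)>x^*\Rightarrow y>x^*$ and $T(y)<x^*\Rightarrow y<x^*$. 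I will also use repeatedly that $S$ is an interval and that, by the termination rule, every iterate produced satisfies $f^\prime(\cdot)\ne 0$, i.e.\ lies outside $S$; consequently a running iterate that is $<x^*$ must lie below all of $S$ (so $f^\prime<0$ there) and one that is $>x^*$ must lie above all of $S$ (so $f^\prime>0$ there). Finally, \cref{prop:GySign:ii} follows from \cref{prop:GySign:i} applied to $x\mapsto f(-x)$, whose minimizer set is $-S$ and under whose reflection both the iterates and \cref{eqn:1dGrad} are mirror images; so I will only prove \cref{prop:GySign:i}.

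The crux is the preliminary inequality $x_k\le y_k$ (for $k=0$ this is immediate since $y_0=x_0$, so take $k\ge 1$). From $x_k=T(y_{k-1})<x^*$ and the contrapositive above, $y_{k-1}<x^*$; moreover $y_{k-1}\notin S$, for otherwise $x_k=T(y_{k-1})=y_{k-1}\in S$ and the algorithm would have stopped. Since $S$ is an interval containing $x^*>y_{k-1}$, this forces $y_{k-1}$ below $S$, whence $f^\prime(y_{k-1})<0$, i.e.\ $y_{k-1}-x_k=\tfrac1L f^\prime(y_{k-1})<0$. Now I invoke the hypothesis that \cref{eqn:1dGrad} failed at iteration $k-1$, which in reformulated form reads $(y_{k-1}-x_k)(x_k-x_{k-1})\le 0$; with the first factor negative this yields $x_k-x_{k-1}\ge 0$. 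As the momentum coefficient $\tfrac{t_{k-1}-1}{t_k}$ is nonnegative by \cref{eqn:FISTAseqA}, we conclude $y_k=x_k+\tfrac{t_{k-1}-1}{t_k}(x_k-x_{k-1})\ge x_k$.

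With $x_k\le y_k$ and $x_k<x^*$ established, the equivalence is a short sign check. Forward direction: if $x_{k+1}>x^*$ then (running iterate outside $S$, $S$ an interval) $f^\prime(x_{k+1})>0$, and the contrapositive gives $y_k>x^*$; since $y_k\notin S$ as well (else $x_{k+1}=y_k\in S$), we get $f^\prime(y_k)>0$, i.e.\ $y_k-x_{k+1}>0$, while $x_{k+1}-x_k>x^*-x_k>0$, so the product is positive and \cref{eqn:1dGrad} holds. Reverse direction: suppose $(y_k-x_{k+1})(x_{k+1}-x_k)>0$. The case of two negative factors would give $y_k<x_{k+1}<x_k$, contradicting $x_k\le y_k$; hence both factors are positive, so $f^\prime(y_k)=L(y_k-x_{k+1})>0$, forcing $y_k$ above $S$ and therefore $x_{k+1}=T(y_k)\ge x^*$. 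Since $x_{k+1}\notin S$ while $x^*\in S$, the inequality is strict, $x_{k+1}>x^*$, and together with $x_k<x^*$ this is exactly $x_k<x^*<x_{k+1}$.

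The main obstacle is the middle step rather than the final sign check: the natural temptation is to prove global monotonicity of the whole orbit and to locate $x^*$ as the near endpoint $\min S$ of the minimizer interval, but these two statements are most easily proved from one another, creating a circularity. The resolution above avoids this entirely by using the no-restart hypothesis only at the single previous step $k-1$ to extract $x_{k-1}\le x_k$, and by reading all sign information off the interval structure of $S$ together with the termination convention $f^\prime(x_j)\ne 0$; the same convention is what upgrades $x_{k+1}\ge x^*$ to the strict $x_{k+1}>x^*$ and so disposes of the boundary case where an iterate would land exactly on $S$.
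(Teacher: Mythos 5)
Your proposal is correct and takes essentially the same route as the paper: your monotonicity and fixed-point facts for $T$, together with the factorization $f^\prime(y_k)=L(y_k-x_{k+1})$, are an operator-theoretic repackaging of the paper's same-side and nonexpansiveness relations \cref{eq:xkyk} and \cref{eq:xkykabs}, and, exactly as in the paper, the pivotal step is to use the failed restart test at iteration $k-1$ to deduce $x_{k-1}\le x_k$ and hence $y_k\ge x_k$ before the concluding sign check. If anything, your explicit handling of a possibly non-singleton $S$ via the termination convention (to upgrade $x_{k+1}\ge x^*$ to strict inequality) is slightly more careful than the paper's proof, which tacitly writes strict inequalities such as $x^*<x_{k+1}<y_k$ under the same implicit convention.
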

	\begin{proof}
		See Appendix~\ref{sec:1}.
	\end{proof}
	\begin{remark}
		\label{rem:totalord}
		A direct consequence of Proposition~\ref{prop:GySign} is that
		we get an ordering of the iterates $x_k$ between restarts.
		Indeed, suppose we have two restarts, $r_1$ and $r_2$ with $r_2 > r_1$.
		If $y_{r_1} < x^*$ then for all 
		$k \in \{r_1,\dots r_2 -1\}$ we have $f^\prime(y_k) < 0$.
		Since we do not restart at any of these iterations we must have that $x_{k+1} - x_k > 0$. Similarly,  if $y_{r_1} > x^*$ we deduce that $x_{k+1} - x_k < 0$.
	\end{remark}
	
	We will also state the following proposition:
	\begin{proposition}\label{prop:xnew_bound}
		Let $x_0\in \R$
		and let $(x_k)_\knn$ 
		and 
		$(y_k)_\knn$ be the sequences obtained
		by Algorithm~1. 
		Let $\knn$ 
		be such that 
		for all $\overline{k}\le k-1$
		the restart condition 
		\cref{eqn:1dGrad} has not been satisfied
		and that
		$k$ is the first iteration where the restart condition is satisfied, i.e., 
		
		\begin{equation}
			\label{e:condprop}
			f^\prime(y_k)(x_{k+1} -x_k) >0.   
		\end{equation}
		Define $z_{k+1}:= (1-t_k)x_{k} + t_k x_{k+1}$.
		Then,
		$$|z_{k+1} - x^*| \geq t_k | x_{k+1} - x^*|.$$
	\end{proposition}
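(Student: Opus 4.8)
The plan is to work out what the definition of $z_{k+1}$ says geometrically and to exploit the ordering of iterates established in \cref{rem:totalord}. Writing $z_{k+1} = (1-t_k)x_k + t_k x_{k+1} = x_{k+1} + (t_k - 1)(x_{k+1} - x_k)$, I would first observe that since $t_k \ge 1$ by \cref{eqn:FISTAseqA}, the point $z_{k+1}$ is obtained from $x_{k+1}$ by moving a nonnegative multiple $(t_k-1)$ of the step $(x_{k+1}-x_k)$. The target inequality $|z_{k+1} - x^*| \ge t_k|x_{k+1}-x^*|$ is a statement about how far $z_{k+1}$ lies from the minimizer relative to $x_{k+1}$, so the strategy is to rewrite both sides in terms of the signed quantities $x_{k+1} - x^*$ and $x_{k+1} - x_k$ and reduce everything to a one-dimensional sign bookkeeping.

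**Exploiting the restart condition and the ordering.**

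Without loss of generality I would treat the case $x_k < x^*$; the case $x_k > x^*$ is symmetric. By \cref{prop:GySign:i}, the restart condition \cref{e:condprop} holding at iteration $k$ is equivalent to $x_k < x^* < x_{k+1}$, so the step overshoots the minimizer. Moreover, since no restart occurred before iteration $k$ and $y_k$ has the same sign relationship as in \cref{rem:totalord}, I can record the signs: $x_{k+1} - x^* > 0$ and $x_{k+1} - x_k > 0$. The key algebraic identity is then
\begin{equation*}
	z_{k+1} - x^* = (x_{k+1} - x^*) + (t_k - 1)(x_{k+1} - x_k),
\end{equation*}
where \emph{both} summands on the right are nonnegative. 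Hence $z_{k+1} - x^* \ge x_{k+1} - x^* \ge 0$, and in particular $z_{k+1}$ lies on the same side of $x^*$ as $x_{k+1}$.

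**Closing the estimate.**

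To produce the factor $t_k$ rather than just $1$, I would lower-bound the overshoot term $(x_{k+1}-x_k)$ by $(x_{k+1} - x^*)$: since $x_k < x^*$ we have $x_{k+1} - x_k > x_{k+1} - x^*$. Substituting this into the identity gives
\begin{equation*}
	z_{k+1} - x^* \ge (x_{k+1}-x^*) + (t_k - 1)(x_{k+1}-x^*) = t_k (x_{k+1}-x^*),
\end{equation*}
and taking absolute values (legitimate because all terms are nonnegative) yields $|z_{k+1}-x^*| \ge t_k|x_{k+1}-x^*|$, as desired. The symmetric case $x_k > x^*$ runs identically with all inequalities reversed, using \cref{prop:GySign:ii} in place of \cref{prop:GySign:i}.

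**Anticipated obstacle.**

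The routine part is the algebra; the step requiring care is justifying the bound $x_{k+1}-x_k > x_{k+1}-x^*$, which hinges precisely on $x_k < x^*$ — exactly the content that \cref{prop:GySign:i} guarantees under the restart condition. I expect the only real subtlety to be confirming that \cref{rem:totalord} genuinely pins down the signs of the two factors so that the absolute values may be dropped cleanly, rather than having to split into further subcases.
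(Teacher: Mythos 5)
Your proposal is correct and takes essentially the same route as the paper: both invoke Proposition~\ref{prop:GySign} to obtain the ordering $x_k < x^* < x_{k+1}$ (with the symmetric case handled identically) and then finish by a one-line decomposition of $z_{k+1}-x^*$ into nonnegative terms using $t_k \ge 1$. Your rearrangement $z_{k+1}-x^* = (x_{k+1}-x^*) + (t_k-1)(x_{k+1}-x_k)$ followed by the bound $x_{k+1}-x_k > x_{k+1}-x^*$ is algebraically equivalent to the paper's decomposition $(1-t_k)(x_k-x^*) + t_k(x_{k+1}-x^*)$, in which the first term is simply dropped.
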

	\begin{proof}
		Suppose that $x_0 < x^*$. By Proposition~\ref{prop:GySign} we have $x_k < x^* < x_{k+1}.$ 
		It follows from this, the definition of $z_{k+1}$
		and \cref{eqn:FISTAseqA} 
		that 
		\begin{equation}
			\begin{split}
				|z_{k+1} - x^*| &= |(1-t_k) x_k + t_k x_{k+1} - x^*|\\
				&=|(1-t_k)(x_k-x^*) + t_k(x_{k+1}-x^*)|\\
				&= (t_k-1)|x_k-x^*| + t_k|x_{k+1}-x^*|\\
				&\geq t_k|x_{k+1} - x^*|.
			\end{split}
		\end{equation}
		The case where $x_0 > x^*$ is shown similarly.
	\end{proof}
	\begin{remark}
		A careful look at the proof of 
		Proposition~\ref{prop:xnew_bound}
		in view of Remark~\ref{rem:totalord}
		reveals that the conclusion
		of Proposition~\ref{prop:xnew_bound}
		generalizes to any number of restarts.
	\end{remark}
	We now recall the following useful fact.
	\begin{fact}{\rm\bf (see, e.g., \cite[Theorem 30.4]{Walaa})}
		\label{lemma:decProp}
		Let $x_0\in \R$
		and let   $(x_k)_\knn$,
		and $(t_k)_\knn$ be given by AGD \cref{e:classicAGD}. 
		For 
		$k \in  \{0,1,2\dots\}$
		we set 
		\begin{subequations}
			\begin{align}
				z_{k+1}&:= (1-t_k)x_{k} + t_k x_{k+1},
				\\
				\delta_k &:= f(x_k) - f(x^*).
			\end{align}
			\label{eq:def:zdelta}
		\end{subequations}
		Then we have the following monotonicity property
		\begin{equation}
			t^2_k\delta_{k+1} + \tfrac{L}{2}\abs{z_{k+1}-x^*}^2 \leq t^2_{k-1}\delta_k + \tfrac{L}{2}\abs{z_k-x^*}^2.
			\label{eq:mono}
		\end{equation}
	\end{fact}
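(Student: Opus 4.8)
The plan is to obtain \cref{eq:mono} from two applications of Fact~\ref{fact:1}, followed by the $t$-sequence condition and an algebraic rearrangement; this is the classical Nesterov/FISTA estimate. First I apply Fact~\ref{fact:1} with $(x,y)=(x_k,y_k)$ and with $(x,y)=(x^*,y_k)$. Since $x_{k+1}=T(y_k)$ and $f(x^*)=f^*$, these yield respectively
\begin{equation*}
\delta_k-\delta_{k+1}\ge \tfrac{L}{2}\abs{x_k-x_{k+1}}^2-\tfrac{L}{2}\abs{x_k-y_k}^2
\quad\text{and}\quad
-\delta_{k+1}\ge \tfrac{L}{2}\abs{x^*-x_{k+1}}^2-\tfrac{L}{2}\abs{x^*-y_k}^2 .
\end{equation*}

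Next, since $t_k\ge 1$ I multiply the first inequality by $t_k-1\ge 0$, add the second, and then multiply the result by $t_k>0$. The left-hand side collapses to $t_k(t_k-1)\delta_k-t_k^2\delta_{k+1}$. Because $x^*\in S$ gives $\delta_k\ge 0$, the sequence condition \cref{eqn:FISTAseqA}, written as $t_{k-1}^2\ge t_k^2-t_k=t_k(t_k-1)$, permits replacing $t_k(t_k-1)\delta_k$ by the larger quantity $t_{k-1}^2\delta_k$. I thus arrive at
\begin{equation*}
t_{k-1}^2\delta_k-t_k^2\delta_{k+1}\ge \tfrac{L t_k}{2}\Big[(t_k-1)\abs{x_k-x_{k+1}}^2+\abs{x^*-x_{k+1}}^2-(t_k-1)\abs{x_k-y_k}^2-\abs{x^*-y_k}^2\Big],
\end{equation*}
and it remains to show that the bracket, scaled by $t_k$, equals $\abs{z_{k+1}-x^*}^2-\abs{z_k-x^*}^2$.

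This identity is the crux of the argument. Setting $a:=x_{k+1}-x^*$ and $b:=x_k-x^*$, the definition $z_{k+1}=(1-t_k)x_k+t_kx_{k+1}$ gives $z_{k+1}-x^*=t_k a-(t_k-1)b$, and expanding the squares shows $t_k\big[(t_k-1)\abs{a-b}^2+\abs{a}^2\big]=\abs{z_{k+1}-x^*}^2+(t_k-1)\abs{b}^2$. For the $y_k$-terms the key relation is $z_k=t_ky_k-(t_k-1)x_k$, which I would verify by feeding the momentum update $y_k=x_k+\tfrac{t_{k-1}-1}{t_k}(x_k-x_{k-1})$ into the definition $z_k=(1-t_{k-1})x_{k-1}+t_{k-1}x_k$. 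Writing $c:=y_k-x^*$ and expanding then yields $t_k\big[(t_k-1)\abs{x_k-y_k}^2+\abs{x^*-y_k}^2\big]=\abs{z_k-x^*}^2+(t_k-1)\abs{b}^2$. Subtracting the second identity from the first cancels the stray term $(t_k-1)\abs{b}^2$ and produces exactly $\abs{z_{k+1}-x^*}^2-\abs{z_k-x^*}^2$, so that $t_{k-1}^2\delta_k-t_k^2\delta_{k+1}\ge\tfrac{L}{2}\big(\abs{z_{k+1}-x^*}^2-\abs{z_k-x^*}^2\big)$, which rearranges to \cref{eq:mono}. I expect the only delicate point to be bookkeeping the momentum relation $z_k=t_ky_k-(t_k-1)x_k$ correctly across indices; everything else is a routine expansion.
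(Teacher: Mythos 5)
Your proof is correct and is essentially the standard Beck--Teboulle/FISTA monotonicity argument: two applications of Fact~\ref{fact:1} at $(x_k,y_k)$ and $(x^*,y_k)$, the combination $t_k\bigl[(t_k-1)(\cdot)+(\cdot)\bigr]$ together with $t_{k-1}^2\ge t_k^2-t_k$ and $\delta_k\ge 0$, and the algebraic identities $z_{k+1}-x^*=t_k(x_{k+1}-x^*)-(t_k-1)(x_k-x^*)$ and $z_k=t_ky_k-(t_k-1)x_k$, all of which I have checked and which hold with your indexing (for $k\ge 1$, where $t_{k-1}$ and $z_k$ are defined). The paper does not prove this fact but cites it (see \cite[Theorem~30.4]{Walaa}), and your argument is exactly the classical proof given in such references, so there is nothing to flag.
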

	We are now ready for our main result. For restarts using the gradient condition \cref{eqn:1dGrad}, we have the following theorem.
	\begin{theorem}[a single restart.]\label{theorem:1Re}
		Let $x_0\in \R$
		and let $(x_k)_{k \in \mathbb{N}},(y_k)_{k \in \mathbb{N}},(t_k)_{k \in \mathbb{N}}$ be given by Algorithm~1. 
		Suppose that iteration $r$ is the first iteration where 
		we have 
		\begin{equation}
			f^\prime (y_r)(x_{r+1}-x_r) > 0,
		\end{equation}
		and that iteration $\overline{r}$ is the second iteration where 
		we have $f^\prime (y_{\overline{r}})(x_{\overline{r}+1}-x_{\overline{r}}) > 0$.
		Then  
		the following hold.
		\begin{enumerate}[{\rm(i)}]
			\item 
			$(\forall k \leq r+1)$
			we have $f(x_k) -f^* \leq \tfrac{2L(x_0-x^*)^2}{(k+1)^2}$.
			\item 
			$f(x_{r+2})\le f(x_{r+1})$.
			\item 
			$(\forall k\in \{r+3,\ldots, \overline{r}+1\})$ 
			we have 
			\begin{equation}
				f(x_k) -f^* \leq \left( \frac{4}{(k-r)(r+2)}\right)^2\frac{L(x_0-x^*)^2}{2} \leq \frac{2L(x_0-x^*)^2}{(k+1)^2}.
				\label{eq:newbound}
			\end{equation}
		\end{enumerate}
	\end{theorem}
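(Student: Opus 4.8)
The plan is to handle the three parts in order, using that Algorithm~1 runs as classical AGD \cref{e:classicAGD} both \emph{before} the first restart and \emph{after} it from a new initial point. For (i), note that for every index up to $r-1$ the restart condition \cref{eqn:1dGrad} fails, so the Algorithm~1 updates coincide with \cref{e:classicAGD} through the computation $x_{r+1}=T(y_r)$ — the restart only takes effect after $x_{r+1}$ has been produced. Hence $x_0,\dots,x_{r+1}$ are exactly the classical AGD iterates launched from $x_0$, and the classical bound $f(x_k)-f^*\le 2L(x_0-x^*)^2/(k+1)^2$ applies verbatim for all $k\le r+1$. For (ii), the restart sets $y_{r+1}=x_{r+1}$, so $x_{r+2}=T(y_{r+1})=T(x_{r+1})$ is a plain gradient step; applying Fact~\ref{fact:1} with $x=y=x_{r+1}$ gives $f(x_{r+1})-f(x_{r+2})\ge \tfrac{L}{2}\abs{x_{r+1}-x_{r+2}}^2\ge 0$, which is the claimed decrease.

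The heart is (iii), and the key quantity to control is $\abs{x_{r+1}-x^*}$. First I would telescope the monotonicity inequality \cref{eq:mono} of Fact~\ref{lemma:decProp} over the indices $0,\dots,r$ — legitimate because the run is classical AGD up to $x_{r+1}$ — which, exactly as in the derivation of the classical bound, yields $t_r^2\delta_{r+1}+\tfrac{L}{2}\abs{z_{r+1}-x^*}^2\le \tfrac{L}{2}\abs{x_0-x^*}^2$. Dropping the nonnegative term $t_r^2\delta_{r+1}$ gives $\abs{z_{r+1}-x^*}\le \abs{x_0-x^*}$. Combining this with Proposition~\ref{prop:xnew_bound}, which supplies $t_r\abs{x_{r+1}-x^*}\le \abs{z_{r+1}-x^*}$, and with $t_r\ge (r+2)/2$ from \cref{eqn:FISTAseqA}, produces the central estimate $\abs{x_{r+1}-x^*}\le 2\abs{x_0-x^*}/(r+2)$.

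Since the restart restores the $t$-sequence and sets $y_{r+1}=x_{r+1}$, the iterates $x_{r+1},x_{r+2},\dots$ are classical AGD started from $x_{r+1}$ up to the next restart $\overline{r}$. Writing $k=(r+1)+m$, the classical bound reapplied at this new initial point gives $f(x_k)-f^*\le 2L\abs{x_{r+1}-P_S(x_{r+1})}^2/(k-r)^2$ for $k\in\{r+1,\dots,\overline{r}+1\}$. As $x^*\in S$, the nearest-point property of the projection yields $\abs{x_{r+1}-P_S(x_{r+1})}\le \abs{x_{r+1}-x^*}$, and inserting the central estimate produces the first inequality of \cref{eq:newbound}. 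For the second inequality I would cancel the common factor $L(x_0-x^*)^2$ and reduce it to the elementary claim $2(k+1)\le (k-r)(r+2)$.

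The main obstacle, and what makes this last inequality hold precisely on the stated range, is to show that $r\ge 2$. Because $t_0=1$ forces $y_1=x_1$, at iterations $k=0$ and $k=1$ one has $y_k=x_k$ and hence $f^\prime(y_k)(x_{k+1}-x_k)=-\tfrac1L f^\prime(x_k)^2\le 0$, so \cref{eqn:1dGrad} cannot fire before iteration $2$. Given $r\ge 2$ and $k\ge r+3$, the difference $(k-r)(r+2)-2(k+1)=r(k-r-2)-2$ is nonnegative, which closes the argument. I expect the only delicate bookkeeping to be this index count — in particular noting that the reduced inequality fails at $k=r+2$, which is exactly why that single index is excluded from (iii) and covered separately by the monotonicity statement (ii).
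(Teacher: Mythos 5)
Your proof is correct and follows essentially the same route as the paper's: both hinge on telescoping the monotonicity inequality of Fact~\ref{lemma:decProp}, invoking Proposition~\ref{prop:xnew_bound} at the restart to gain the factor $t_r \ge (r+2)/2$ from \cref{eqn:FISTAseqA}, and reducing the second inequality of \eqref{eq:newbound} to $2(k+1)\le (k-r)(r+2)$, i.e., $r(k-r-2)\ge 2$, via $r\ge 2$; your only (harmless) organizational difference is that you drop the $t_r^2\delta_{r+1}$ term early and quote the classical AGD bound as a black box for the restarted run from $x_{r+1}$ (correctly handling the projection via $|x_{r+1}-P_S(x_{r+1})|\le |x_{r+1}-x^*|$), whereas the paper carries the function-value term through an explicit second telescope down to $\overline{z}_1=\overline{x}_1$ and closes with Fact~\ref{fact:1}, arriving at the identical bound $\tfrac{8L}{((k-r)(r+2))^2}(x_0-x^*)^2$. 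A small bonus on your side: you actually prove $r\ge 2$ (from $t_0=1$ forcing $y_1=x_1$, so that $f^\prime(y_k)(x_{k+1}-x_k)=-\tfrac{1}{L}f^\prime(x_k)^2\le 0$ at $k=0,1$), a fact the paper's proof merely asserts with ``Observe that $r\ge 2$.''
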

	\begin{proof}
		See Appendix~\ref{sec:2}.
	\end{proof}

	\begin{theorem}[multiple restarts.]\label{prop:mRe} 
		Let $x_0\in \R$ and set $(\forall \knn )$ $t_k=\tfrac{k+1}{2}$.
		Let $(x_k)_{k \in \mathbb{N}}$ be given by  Algorithm~1. 
		Let $p\in \mathbb{N}$. Suppose that $r_p$ is the 
		$p^{\text{\rm th}}$-iteration such that 
		\begin{equation}
			\label{e:condprop:pth}
			f^\prime(y_{r_p})(x_{r_p+1} -x_{r_p}) >0   .
		\end{equation}
		Then $(\forall p\ge 2)$ $(\forall k\in \{r_p+2, \ldots,r_{p+1}+1\} )$ we have 
		\begin{subequations}
			\begin{align}
				f(x_k)-f(x^*)
				& \leq \left(\frac{2^{p+1}}{(k-r_{p}) (r_p-r_{p-1}+2)...(r_2-r_1+2)(r_1+2)} \right)^2 \frac{L}{2}(x_0-x^*)^2 \label{eq:multrestrhs}
				\\
				& \le \frac{2L(x_0-x^*)^2}{(k+1)^2}.
			\end{align}
		\end{subequations}
	\end{theorem}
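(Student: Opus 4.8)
The plan is to prove the statement by induction on the number of restarts $p$, reducing everything to a single scalar quantity: the distance $\abs{x_{r_p+1}-x^*}$ of the iterate produced immediately after the $p$-th restart. First I would isolate the inductive claim
\[
\abs{x_{r_p+1}-x^*}\le \frac{2^p}{(r_1+2)\prod_{j=2}^{p}(r_j-r_{j-1}+2)}\,\abs{x_0-x^*},
\]
from which the first bound \cref{eq:multrestrhs} follows by running classical AGD over the final restart-free segment. The base case $p=1$ is exactly the engine of \cref{theorem:1Re}: \cref{prop:xnew_bound} shows that the overshooting iterate $x_{r_1+1}$ is a factor $t_{r_1}$ closer to $x^*$ than $z_{r_1+1}$, while the monotonicity of Fact~\ref{lemma:decProp} controls $\tfrac{L}{2}\abs{z_{r_1+1}-x^*}^2$ by $\tfrac{L}{2}\abs{x_0-x^*}^2$; together these give $\abs{x_{r_1+1}-x^*}\le t_{r_1}^{-1}\abs{x_0-x^*}$.

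For the inductive step I would view the iterations strictly between the restarts $r_{p-1}$ and $r_p$ as a fresh AGD run initialized at $x_{r_{p-1}+1}$ with the reset momentum sequence. Here \cref{rem:totalord} fixes the sign pattern of the increments between restarts and \cref{prop:GySign} characterizes the overshoot, so the generalization of \cref{prop:xnew_bound} noted in the remark following it applies verbatim to this segment and yields the one-step contraction $\abs{x_{r_p+1}-x^*}\le t_{r_p}^{-1}\abs{x_{r_{p-1}+1}-x^*}$, with $t_{r_p}$ the reset momentum parameter at the restart iteration. Chaining these contractions over $j=1,\dots,p$ and substituting the resulting bound on $\abs{x_{r_p+1}-x^*}$ into the classical estimate for the last segment --- which starts at $x_{r_p+1}$ and runs for $k-r_p-1$ steps, hence carries denominator $(k-r_p)^2$ --- produces \cref{eq:multrestrhs}. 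The delicate part of this step is the index bookkeeping that makes each factor exactly $r_j-r_{j-1}+2$: the first segment begins at global index $0$ and contributes $r_1+2$, whereas each later segment must be realigned with the reset $t$-sequence, and keeping the constants sharp is what the appendix must track carefully.

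The second inequality, that the refined bound never exceeds the classical $\tfrac{2L(x_0-x^*)^2}{(k+1)^2}$, is where I expect the real difficulty. After cancelling squares and constants it is equivalent to the purely combinatorial inequality
\[
(k-r_p)(r_1+2)\prod_{j=2}^{p}(r_j-r_{j-1}+2)\ \ge\ 2^{p}(k+1).
\]
The $p+1$ factors on the left are positive and, by telescoping, sum to $k+2p$; since the logarithm of a product is concave, its minimum over the feasible region is attained at a vertex where all but one factor sit at their lower bounds. With only the trivial constraint that each factor is $\ge 2$, this vertex argument yields only $2^{p}k$, which is short of the required $2^{p}(k+1)$.

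Consequently the crux is a preliminary lemma asserting that the restart condition \cref{eqn:1dGrad} can fire neither at iteration $0$ nor at the step immediately after a restart: in both situations $y_k=x_k$, so
\[
f'(y_k)(x_{k+1}-x_k)=-\tfrac{1}{L}\bigl(f'(x_k)\bigr)^2\le 0,
\]
which contradicts \cref{eqn:1dGrad}. This forces $r_1\ge 1$ and $r_j-r_{j-1}\ge 2$ for $j\ge 2$, raising the lower bounds on the factors $r_1+2$ and $r_j-r_{j-1}+2$ to $3$ and $4$ respectively; redoing the vertex analysis with these sharpened bounds makes the extremal product meet $2^{p}(k+1)$, with equality in the tightest configuration $r_1=1$, $r_j-r_{j-1}=2$, $k=r_p+2$. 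I would therefore establish this ``no immediate restart'' lemma first, use it together with \cref{prop:GySign} inside the inductive step, and then close the second inequality through the sharpened combinatorial estimate; the main obstacle is thus this combinatorial optimization and the precise constant tracking on which it depends.
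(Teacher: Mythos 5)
Your proposal is correct in substance, and for the first inequality \cref{eq:multrestrhs} it is essentially a repackaging of the paper's argument: the paper telescopes the Lyapunov quantity of Fact~\ref{lemma:decProp} across each restart-free segment and invokes Proposition~\ref{prop:xnew_bound} at every restart boundary (arriving at the product bound \cref{eqn:tkBNDS}), which is exactly your chained contraction $\abs{x_{r_j+1}-x^*}\le t_{r_j}^{-1}\abs{x_{r_{j-1}+1}-x^*}$ obtained by dropping the nonnegative $\delta$-terms from the same Lyapunov chain --- a slightly cleaner modularization with the same engine, and your flag about the index bookkeeping for the reset $t$-sequence is apt, since the paper's own appendix is loose on precisely that alignment. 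Where you genuinely diverge is the second inequality. The paper proves the comparison with $2/(k+1)$ by induction on $p$ through the chain of \emph{adjacent} estimates
\begin{equation*}
\frac{2^{p+1}}{(k-r_p)\prod_{i=1}^{p}(r_i-r_{i-1}+2)}\le\cdots\le\frac{4}{(k-r_1)(r_1+2)}\le\frac{2}{k+1},
\end{equation*}
where each link reduces, upon writing $k=r_j+a$, to the sign of $(2-a)(r_j-r_{j-1})\le 0$ and therefore needs no lower bound on the gaps at all; only the final link requires $r_1\ge 2$, which the paper gets from the observation at the start of the proof of \cref{theorem:1Re}. You instead attack the global product-versus-sum inequality $(k-r_p)(r_1+2)\prod_{j=2}^{p}(r_j-r_{j-1}+2)\ge 2^p(k+1)$ by log-concavity and a vertex analysis, and you correctly identify that with only the trivial bound ``each factor $\ge 2$'' this falls short by exactly one unit, so that your ``no immediate restart'' lemma is genuinely needed in your route; that lemma is true as you prove it ($y_{k}=x_k$ at $k=0$ and right after a restart, whence $f'(y_k)(x_{k+1}-x_k)=-\tfrac1L(f'(x_k))^2\le 0$), and your vertex computation does close the inequality on the resulting superset of configurations. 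Two refinements: your constraints are not tight, because $t_0=1$ forces $y_1=x_1$ in \emph{every} fresh segment as well, so in fact $r_1\ge 2$ and $r_j-r_{j-1}\ge 3$ (this is exactly how the paper gets $r_1\ge 2$, and it means your ``equality'' configuration $r_1=1$, $r_j-r_{j-1}=2$ is never realized by the algorithm --- harmless, since you prove the bound over a relaxation); and your route buys a structural explanation (restarts cannot cluster, and the bound degrades by precisely the per-restart factor) at the cost of a multi-case vertex check, whereas the paper's adjacent-link telescoping is shorter and needs only the single fact $r_1\ge2$.
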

	\begin{proof}
		See Appendix~\ref{sec:3}.
	\end{proof}
	
	\begin{remark} 
		For sufficiently large $k$ and $r$, the upper bound of $8/((k-r)(r+2))^2$ in \eqref{eq:newbound} can be much smaller than the classical factor $2/(k+1)^2$ for $k>r$.  For example, if $r=10,000$, $k=11,000$, then the factor in our new bound is $\approx 8\cdot 10^{-14}$ versus $2/(k+1)^2\approx 1.7\cdot 10^{-8}$.  The right-hand side of \eqref{eq:multrestrhs} further improves \eqref{eq:newbound}.
	\end{remark}
	\begin{corollary}
		Let $x_0\in \R$ and set $(\forall \knn )$ $t_k=\tfrac{k+1}{2}$.
		Let $(x_k)_{k \in \mathbb{N}}$ be given by  Algorithm~1 . 
		Then 
		\begin{equation}
			f(x_k)\to f^*.
		\end{equation}
	\end{corollary}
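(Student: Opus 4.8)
The plan is to split the analysis according to whether Algorithm~1 triggers finitely or infinitely many restarts, and in each case to reduce the claim to a decay estimate that has already been established. A preliminary observation underpins both cases: two restarts can never occur at consecutive iterations. Indeed, immediately after a restart at iteration $r$ we have $y_{r+1}=x_{r+1}$, so the next update $x_{r+2}=T(y_{r+1})=x_{r+1}-\tfrac1L f'(x_{r+1})$ is a pure gradient step, whence $f'(y_{r+1})(x_{r+2}-x_{r+1})=-\tfrac1L\big(f'(x_{r+1})\big)^2\le 0$ and the restart condition \cref{eqn:1dGrad} fails at $r+1$ (if $f'(x_{r+1})=0$ the algorithm has already stopped at a minimizer). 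Consequently, if $r_1<r_2<\cdots$ denote the successive restart iterations, then $r_{p+1}\ge r_p+2$ for every $p$.

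First I would treat the case of finitely many restarts, with the last one occurring at iteration $r_P$ (the subcase of no restarts is handled identically, with $x_0$ in place of $x_{r_P+1}$). This case cannot be read off \cref{prop:mRe} directly, since that result presupposes the existence of a \emph{next} restart $r_{P+1}$. However, for all $k>r_P$ no further restart is triggered, so the tail $(x_k)_{k\ge r_P+1}$ is generated by classical AGD \cref{e:classicAGD} initialized at $x_{r_P+1}$ with the freshly reset parameter sequence. The classical AGD bound then gives $f(x_k)-f^*\le \tfrac{2L(x_{r_P+1}-x^*)^2}{(k-r_P)^2}$ for $k\ge r_P+1$, and the right-hand side tends to $0$ as $k\to\infty$; hence $f(x_k)\to f^*$.

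The remaining case is that of infinitely many restarts $r_1<r_2<\cdots$, with $r_p\to\infty$. Here I would invoke \cref{prop:mRe}: for every $p\ge 2$ and every $k\in\{r_p+2,\dots,r_{p+1}+1\}$ we have $f(x_k)-f^*\le \tfrac{2L(x_0-x^*)^2}{(k+1)^2}$. The key structural point is that these index blocks tile a tail of $\mathbb{N}$. Each block is nonempty because $r_{p+1}+1\ge r_p+2$ by the preliminary observation, and consecutive blocks abut because $r_{p+1}+1$ (the last index of block $p$) and $r_{p+1}+2$ (the first index of block $p+1$) are consecutive integers; hence their union over $p\ge 2$ is exactly $\{k\in\mathbb{N}: k\ge r_2+2\}$. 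Since $r_p\to\infty$, this is an infinite tail, so the bound $f(x_k)-f^*\le \tfrac{2L(x_0-x^*)^2}{(k+1)^2}$ holds for all $k\ge r_2+2$; letting $k\to\infty$ yields $f(x_k)\to f^*$. Combining the two cases proves the corollary.

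I expect the main obstacle to be organizational rather than analytic. The genuine reason for the case split is that \cref{prop:mRe} is phrased in terms of a pair of successive restarts, so it simply does not apply past the final restart in the finite-restart scenario, forcing the separate (elementary) appeal to the classical AGD bound there. Within the infinite-restart case, the only care needed is to confirm that the blocks furnished by \cref{prop:mRe} leave no gaps, which rests precisely on $r_{p+1}\ge r_p+2$ and on the adjacency of consecutive blocks established above. Once the covering is confirmed, convergence is immediate from the decay of $2L(x_0-x^*)^2/(k+1)^2$.
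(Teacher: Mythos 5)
Your proof is correct, and in substance it is the argument the paper intends: the paper's own proof of this corollary is a single line declaring it ``a direct consequence'' of \cref{theorem:1Re} and \cref{prop:mRe}, with no details. What you add beyond the paper is genuine completeness rather than a different route. Both cited results presuppose the existence of a \emph{subsequent} restart ($\overline{r}$ in \cref{theorem:1Re}, $r_{p+1}$ in \cref{prop:mRe}), so the case of finitely many restarts (including none, or exactly one) is not literally covered by them; your tail argument --- that past the last restart the iterates follow classical AGD \cref{e:classicAGD} from $x_{r_P+1}$ with a reset parameter sequence, so $f(x_k)-f^*\le 2L(x_{r_P+1}-x^*)^2/(k-r_P)^2\to 0$ --- is exactly the missing piece the paper leaves implicit. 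Your preliminary observation that no two restarts are consecutive (the post-restart update is a pure gradient step, so $f'(y_{r+1})(x_{r+2}-x_{r+1})=-\tfrac{1}{L}\bigl(f'(x_{r+1})\bigr)^2\le 0$, and \cref{eqn:1dGrad} fails) is correct, though slightly stronger than needed: nonemptiness of the block $\{r_p+2,\ldots,r_{p+1}+1\}$ already follows from the trivial $r_{p+1}\ge r_p+1$, and the adjacency of consecutive blocks is automatic. With the blocks tiling the tail $\{k\ge r_2+2\}$ in the infinite-restart case, the $2L(x_0-x^*)^2/(k+1)^2$ bound from \cref{prop:mRe} gives convergence, as you say; so your write-up is a valid and in fact more careful rendering of the paper's proof.
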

	\begin{proof}
		This is a direct consequence of
		\cref{theorem:1Re} 
		and \cref{prop:mRe}.
	\end{proof}
	
	\section{Conclusion}
	In our main result, we prove that the gradient based restarts of O'Donoghue and Cand\`es  \cite{O'Donoghue2015Jun}
	employed with a slight modification (see Remark~\ref{rem:ourres})
	improve the classical right-hand side bound from AGD when $n=1$. 
	The modification 
	in Remark~\ref{rem:ourres}
	allowed the use of Proposition~\ref{prop:xnew_bound}, and in computational experiments it performed slightly better, even in cases where $n \geq 1.$ 
	\\
	One remark is that our proof translates to higher dimensions if one has a restart condition that implies $t_k ||x_{k+1}-x^*|| \leq ||z_{k+1}-x^*||$. While this cannot be useful in practice because it requires prior knowledge of the minimizer, we have experimentally observed that in cases where we do know the minimizer this restart condition performs well. Giselsson and Boyd also noted that having $||x_{k} - x^*|| \leq ||z_{k}-x^*||$ would improve the constant in their convergence analysis. A direction of further research would be to create a restart condition which would imply this condition. Although the analysis of  Algorithm~1  does not extend to higher dimensions, we want to remark on the case of nearly separable functions. For such functions, we observed that running  Algorithm~1  in parallel along each coordinate can perform better than using the restarting algorithm of O'Donoghue and Cand\`es as illustrated in Appendix~\ref{sec:HL}.
	\subsection*{Acknowledgements}
	The research of WMM and SAV is supported by their Natural Sciences and 
	Engineering Research Council of Canada Discovery Grants (NSERC-DG).
	\bibliography{OptML_2023/opt2023}
	\newpage
	\clearpage
	\appendix
	
	\section{}
	\label{sec:1}
		\textit{Proof of Proposition~\ref{prop:GySign}.}
		Let $j\ge 1$
		and suppose that 
		$x_{j}$ and $y_{j}$ are given by the updates
		in \cref{e:classicAGD}. 
		Using the fact that 
		$f^\prime (x^*)=0$
		yields
		\begin{equation}
			\label{eq:xkyk}
			(x_j-x^*)(y_{j-1}-x^*)=(y_{j-1}-x^*)^2
			-\tfrac{1}{L}f^\prime (y_{j-1})(y_{j-1}-x^*)\ge 0.
		\end{equation}
		Moreover, because $x_j$ is a gradient step from $y_{j-1}$ of size $1/L$ 
		we learn that 
		\begin{equation}
			\label{eq:xkykabs}
			\abs{x_{j}-x^*}\le \abs{y_{j-1}-x^*}.
		\end{equation}
		(\labelcref{prop:GySign:i}): 
		``$\Rightarrow$":
		Suppose that the gradient condition is satisfied at iteration $k$, i.e.,
		$f^\prime(y_k)(x_{k+1}-x_k) > 0$.
		Then
		for
		$j \le  k-1$ we have 
		$f^\prime(y_j)(x_{j+1}-x_j) \leq  0$.
		In particular, for $ j =k-1$ we learn that  
		\begin{equation}
			\label{eq:restconk}
			f^\prime(y_{k-1})(x_{k}-x_{k-1}) \leq  0.
		\end{equation}
		Because $x_k < x^*$,
		using \cref{eq:xkykabs} and \cref{eq:xkyk} applied with $j$ replaced by $k$
		we conclude that $y_{k-1} \le x_k < x^*.$ Therefore $f^\prime (y_{k-1}) < 0$ and consequently  by \cref{eq:restconk} we have 
		\begin{equation}
			x_k \ge x_{k-1}.
		\end{equation}
		We claim that  $y_k > x^*$. Indeed,  Algorithm~1  gives us,
		\begin{equation}
			y_k = x_k + \tfrac{t_k-1}{t_{k+1}}(x_k - x_{k-1}).
		\end{equation}
		Since $t_k$ is an increasing sequence and $t_k \geq 1$ for all $k \geq 1$, alongside the fact that $x_k - x_{k-1} \ge 0,$ we obtain that $y_k \ge  x_k.$ Suppose for eventual contradiction that 
		$y_k < x^*$. 
		Then the above arguments yields $f^\prime(y_k) < 0$ and 
		$x_{k+1} = y_{k} - \frac{1}{L}f^\prime(y_k)> y_k\ge x_k$. This implies that 
		$$f^\prime(y_k)(x_{k+1} - x_k) <0$$
		which is absurd in view of \cref{e:condprop}. Therefore we have that $y_k > x^*$ which, in turn, implies that $f^\prime(y_k) > 0$ and that 
		$x^*<x_{k+1}\le y_k$. 
		Hence we conclude $x_k < x^* < x_{k+1}.$
		``$\Leftarrow$":
		Suppose that $x_k < x^* < x_{k+1}.$ 
		Then \cref{eq:xkyk} 
		and \cref{eq:xkykabs}
		applied with $j$ replaced by $k$ and with $j$ replaced by $k+1$
		implies
		\begin{equation}
			y_{k-1} < x_k < x^* < x_{k+1} < y_k,
		\end{equation}
		which immediately implies  that 
		\begin{equation}
			f^\prime(y_k)(x_{k+1}-x_k) > 0.
		\end{equation}
		Therefore the gradient restart condition is satisfied at iteration $k$. 
		(\labelcref{prop:GySign:ii}): Proceed similarly to the proof of
		(\labelcref{prop:GySign:i}). \hfill{$\blacksquare$}
		
		\section{}
		\label{sec:2}
		
		\textit{Proof of \cref{theorem:1Re}.}
		Observe that $r\ge 2$.
		In the following we let $(\overline{x}_k)_{k \in \mathbb{N}}$, 
		$(\overline{y}_k)_{k \in \mathbb{N}}$,
		and
		$
		(\overline{t}_k)_{k \in \mathbb{N}}$ be given by the classical AGD update
		\cref{e:classicAGD} with the starting 
		point $\overline{x}_0:=x_0$.
		Clearly, $\overline{t}_0 =    {t}_0 = 1$. Moreover, for all $k \leq r$ we have, 
		\begin{equation}
			\label{e:thmmain:1}
			\overline{x}_k =    {x}_k, \overline{t}_k =    {t}_k.
		\end{equation} 
		Furthermore, since we keep the point after the restart iteration we have $\overline{x}_{r+1} =    {x}_{r+1}$. Now since at iteration $r+1$ we reset the parameter sequence we have that now $   {t}_{r+1}= 1$, which means that for $k \geq r+1$ we have the relation that $   {t}_k = \overline{t}_{k-(r+1)}.$
		For 
		$k \in  \{0,1,2\dots\}$
		we set 
		\begin{subequations}
			\begin{align}
				\overline{z}_{k+1}&:= (1-t_k)\overline{x}_{k} + \overline{t}_k \overline{x}_{k+1},
				\label{eq:defzktil}
				\\
				\overline{\delta}_k 
				&:= f(\overline{x}_k) - f(x^*)
				\label{eq:defdeltaktil}
			\end{align}
		\end{subequations}
		and 
		we set 
		\begin{subequations}
			\begin{align}
				{z}_{k+1}&:= (1-   {t}_k)   {x}_{k} +    {t}_k    {x}_{k+1},
				\label{eq:defzk}
				\\
				{\delta}_k &:= f(   {x}_k) - f(x^*).
				\label{eq:defdeltak}
			\end{align}
		\end{subequations}
		Due to the restart,
		\begin{equation}
			{z}_{r+2} = (1 -    {t}_{r+1})   {x}_{r+1} +    {t}_{r+1}    {x}_{r+2} =    {x}_{r+2}.
		\end{equation}
		Let $k \geq r+2$.  Using Fact~\ref{lemma:decProp} 
		we have 
		\begin{subequations}
			\begin{align}
				{\delta}_k  = f(   {x}_k) - f(x^*) &\leq \tfrac{1}{   {t}_{k-1}^2}\left( \tfrac{L}{2}(   {z}_{k}-x^*)^2 +    {t}^2_{k-1}   {\delta}_k\right)\\
				&\leq \tfrac{1}{   {t}_{k-1}^2}\left( \tfrac{L}{2}(   {z}_{k-1}-x^*)^2 +    {t}^2_{k-2}   {\delta}_{k-1}\right)\\
				&\leq \\
				&\vdots \\
				&\leq \tfrac{1}{   {t}_{k-1}^2}\left( \tfrac{L}{2}(   {z}_{r+2}-x^*)^2 +    {t}^2_{r+1}   {\delta}_{r+2}\right)\\
				&= \tfrac{1}{   {t}_{k-1}^2}\left( \tfrac{L}{2}(   {x}_{r+2}-x^*)^2 +    {\delta}_{r+2}\right)
				\label{eqn:TeleAGDRE}
			\end{align}
		\end{subequations}
		where we now used in the equality that $   {t}_{r+1} = 1,$ and that $   {z}_{r+2}=   {x}_{r+2}$.
		Recall from  Algorithm~1  that we update ${x}_k
		$ by taking a gradient step from ${y}_{k-1}.$ 
		$$   {x}_{r+2} =    {y}_{r+1} - \frac{1}{L}f^\prime(   {y}_{r+1})$$
		but since we restarted we have that $   {y}_{r+1} =    {x}_{r+1}$, so
		$$   {x}_{r+2} =    {x}_{r+1} - \frac{1}{L}f^\prime(   {x}_{r+1}).$$
		This observation allows us to 
		make further progress. 
		On the one hand, 
		recalling $   {y}_{r+1} =    {x}_{r+1}$,
		it follows from 
		\cref{eq:xkykabs} applied with $j$
		replaced by $r+2$
		that 
		\begin{equation}
			(   {x}_{r+2} - x^*)^2 \leq (   {x}_{r+1} - x^*)^2.
		\end{equation}
		On the other hand, 
		because $   {x}_{r+2}$
		is a gradient step from 
		$   {x}_{r+1}$ with a proper step-length
		we have that $f(\overline{x}_{r+1})=f(   {x}_{r+1}) \ge  f(   {x}_{r+2}) > f(x^*)$
		which, in view of \cref{eq:defdeltak},
		implies  that  $   {\delta}_{r+2} \le    {\delta}_{r+1}=\overline{\delta}_{r+1}$.
		Combining this and \cref{eqn:TeleAGDRE} 
		yields
		\begin{equation}
			{\delta}_{k} \leq \tfrac{1}{\overline{t}_{k-r-2}^2}\left( \tfrac{L}{2}(x_{r+1} - x^*)^2 + \overline{\delta}_{r+1}\right)
		\end{equation}
		where we used that $   {x}_{r+1} = \overline{x}_{r+1},$ and that 
		$t_{k-1} = \overline{t}_{k-{(r+1)}-1}=\overline{t}_{k-r-2}.$ Proposition~\ref{prop:xnew_bound} yields
		\begin{subequations}
			\begin{align}
				{\delta}_{k} 
				&\leq \frac{1}{\overline{t}_{k-r-2}^2}\left( \frac{L}{2}\frac{1}{\overline{t}_r^2}(\overline{z}_{r+1} - x^*)^2 + \overline{\delta}_{r+1}\right)\\
				&= \frac{1}{\overline{t}_{k-r-2}^2 \overline{t}^2_r}\left( \frac{L}{2}(\overline{z}_{r+1} - x^*)^2 + \overline{t}_r^2\overline{\delta}_{r+1}\right)\\
				&\leq\\
				&\qquad\vdots\\
				&\leq \frac{1}{\overline{t}_{k-r-2}^2 \overline{t}^2_r}\left( \tfrac{L}{2}(\overline{z}_{1} - x^*)^2 + \overline{\delta}_{1}\right)\\
				&= \frac{1}{\overline{t}_{k-r-2}^2 \overline{t}^2_r}\left( \tfrac{L}{2}(\overline{x}_{1} - x^*)^2 + \overline{\delta}_{1}\right)\\
				&\leq
				\frac{L}{2 (\overline{t}_{k-r-2}\overline{t}_r)^2}\left(x_0 - x^*\right)^2
				\label{e:40g}
				\\
				&\leq 
				\frac{8L}{((k-r) (r+2))^2}\left(x_0 - x^*\right)^2,
				\label{e:40h}
			\end{align}   
		\end{subequations}
		where \cref{e:40g} follows from 
		Fact~\ref{fact:1} 
		applied with $x$ replaced by $x^*$ and $y$
		replaced by $x_0$
		and \cref{e:40g} follows from 
		\cref{eqn:FISTAseqA}.
		To finish the proof we claim that for $k\ge r+3$ we have 
		\begin{equation}
			\label{e:ineqbd}
			\frac{4}{((k-r) (r+2))^2}\le \frac{1}{(k+1)^2}.
		\end{equation}
		Indeed, observe that 
		\begin{subequations}
			\begin{align}
				\cref{e:ineqbd}
				&\siff 
				2(k+1)\le (k-r)(r+2)
				\\
				&\siff r(k-(r+2))\ge 2,
			\end{align}   
		\end{subequations}
		which is always true for $k\ge r+3$ by recalling that $r\ge 2$.
		By definition, $ {x}_{r+2}=T ({x}_{r+1})$ and hence  $|   { x}_{r+2}-x^*|\le |   {x}_{r+1}-x^*|$ and $f(   {x}_{r+2})\le f(   {x}_{r+1})$.
		\hfill{$\blacksquare$}

		\section{}
		\label{sec:3}
		\textit{Proof of \cref{prop:mRe}.}
		We introduce the following notation to identify the multiple restarts. We denote by
		$(\xre{i}_j)_{j\in \mathbb{N}}$ the sequence of iterates given by starting AGD with $\xre{i}_0 = x_{r_i+1}$ and $t^{(r_i)}_0 = t_{r_i+1} = 1.$ We can now identify that between two consecutive restarts say $r_{i}$ and $r_{i+1}$ we have that $(\xre{i}_0,\xre{i}_1,\xre{i}_2, \dots \xre{i}_{r_{i+1}-r_i})$ coincide with $(x_{r_i+1},x_{r_i+2},x_{r_i+3},\dots,x_{r_{i+1}})$. For iterations  $k >r_p$ we have that,
		$(\xre{p}_0,\xre{p}_1,\dots,\xre{p}_{k-r_p-1})$ coincide with with $(x_{r_p+1},x_{r_p+2},\dots,x_k)$. 
		Upholding the notation of \cref{eq:def:zdelta}
		we similarly denote the corresponding values of 
		$\zre{i}$, $\delta^{(r_i)}$ and $\tre{i}$.
		We proceed similarly to the proof of \cref{theorem:1Re}. 
		Let $k \ge r_p+2$. Then
		\begin{subequations}
			\begin{align}
				f(x_k) - f(x^*) &= f(\xre{p}_{k-r_p-1})-f(x^*)\\
				&\leq  \frac{1}{(\tre{p}_{k-r_p-2})^2}\left( \frac{L}{2}(\zre{p}_{k-r_p-1} - x^*)^2 +(\tre{p}_{k-r_p-2})^2\dre{p}_{k-r_p-1} \right)\\
				&\leq \\
				\nonumber
				&\vdots
				\nonumber
				\\
				&\leq \frac{1}{(\tre{p}_{k-r_p-2})^2}\left( \frac{L}{2}(\zre{p}_{1} - x^*)^2 +(\tre{p}_{0})^2\dre{p}_{1} \right)\\
				&= \frac{1}{(\tre{p}_{k-r_p-2})^2}\left( \frac{L}{2}(\xre{p}_1 - x^*)^2 +\dre{p}_{1} \right) .  
			\end{align}
		\end{subequations}
		We now use that $\xre{p}_1$ is a gradient step from $\xre{p}_0$ to conclude,
		\begin{subequations}
			\begin{align}
				f(x_k)-f(x^*) 
				&\leq \frac{1}{(\tre{p}_{k-r_p-2})^2}\left( \frac{L}{2}(\xre{p}_0 - x^*)^2 +\dre{p}_{0} \right)\\
				&=\frac{1}{(\tre{p}_{k-r_p-2})^2}\left( \frac{L}{2}(\xre{p-1}_{r_p-r_{p-1}+1} - x^*)^2 +\dre{p-1}_{r_p-r_{p-1}+1} \right)
			\end{align}
			\label{eq:ms6}
		\end{subequations}
		where we get equality because when the $r_{p}$ restart gets triggered we keep $\xre{p-1}_{r_p - r_{p-1}+1}$ unchanged and set $\xre{p}_0 = \xre{p-1}_{r_p-r_{p-1}+1}.$ We can now use Proposition~\ref{prop:xnew_bound} to obtain
		\begin{equation}
			|\xre{p-1}_{r_p-r_{p-1}+1}-x^*| \leq \frac{1}{\tre{p-1}_{r_p-r_{p-1}}}|\zre{p-1}_{r_p-r_{p-1}+1}-x^*|.
			\label{eq:ms7}
		\end{equation}
		Combining \cref{eq:ms6} and  \cref{eq:ms7}
		and factoring out the $\frac{1}{\tre{p-1}_{r_p-r_{p-1}}}$ term we obtain
		\begin{equation}
			f(x_k)-f(x^*) 
			\leq \frac{1}{(\tre{p}_{k-r_p-2})^2(\tre{p-1}_{r_p-r_{p-1}})^2}\left( \frac{L}{2}(\zre{p-1}_{r_p-r_{p-1}+1} - x^*)^2 +(\tre{p-1}_{r_p-r_{p-1}})^2\dre{p-1}_{r_p-r_{p-1}+1} \right).
			\label{eq:msi}
		\end{equation}
		We now repeat the same procedure. Since the terms in the parentheses on the right-hand side of \cref{eq:msi} have the monotonicity property from Fact~\ref{lemma:decProp}, we use it repeatedly
		until we obtain an expression 
		of the form of the right-hand side of in \cref{eq:mono}
		that features 
		$\zre{i}_1$.
		We then apply the fact that $\zre{i}_1 = \xre{i}_1$, which is a gradient step from $\xre{i}_0$, and finally use Proposition~\ref{prop:xnew_bound}. Observe that every time
		we apply Proposition~\ref{prop:xnew_bound} an additional factor of $1/(\tre{i}_{r_{i+1}-r_1})^2$ for some $i\in \{1,\ldots, p\}$ appears in the denominator of the right-hand side of 
		the inequality  \cref{eq:msi}. This is done $p$ times until the term in the parentheses reduces to $(L/2)(x_0-x^*)^2.$
		We therefore obtain
		\begin{equation}\label{eqn:tkBNDS}
			f(x_k)-f(x^*) \leq
			\frac{1}{(\tre{p}_{k-r_p-2})^2\left( \prod_{i=1}^{p}(\tre{i}_{r_{i}-r_{i-1}})^2\right)}\left(\frac{L}{2}(x_0-x^*)^2\right).
		\end{equation}
		Recall that for each $i\in \{1, \ldots, p\}$
		$(\forall k\in \{0, \ldots,r_i \})$
		we have $\tre{i}_k= (k+2)/2,$ hence:
		
		\begin{equation}\label{eqn:FistaPropRP}
			\frac{1}{\tre{p}_{k-r_p-2}} =\frac{2}{k-r_p }
		\end{equation}
		and
		\begin{equation}\label{eqn:FistaPropRi}
			\frac{1}{\tre{i}_{r_i - r_{i-1}}} = \frac{2}{r_{i}-r_{i-1}+2}.
		\end{equation}
		Now let $\knn$. Observe that
		if $k\le r_2+1$ we obtain 
		the $\mathcal{O}(1/k^2)$
		function value convergence rate
		from \cref{theorem:1Re}.
		We now show that 
		$(\forall p\ge 2)$
		$(\forall k \geq r_p +2)$ we have 
		\begin{equation}
			\frac{2^{p+1}}{(k-r_p)(r_p-r_{p-1}+2)\dots (r_2-r_1+2)(r_1+2)} \leq \dots \leq \frac{4}{(k-r_1)(r_1+2)}\leq \frac{2}{(k+1)}.
		\end{equation} 
		We proceed by induction on $p$.
		We first verify the base case at $p =2$. Applying \cref{eqn:FistaPropRP} and \cref{eqn:FistaPropRi} to \cref{eqn:tkBNDS} we get
		\begin{equation}
			f(x_k) - f(x^*) \leq \left(\frac{8}{(k-r_2)(r_2-r_1+2)(r_1 +2)}\right)^2\frac{L}{2}(x_0 -x^*)^2.
		\end{equation}
		For $k = r_2+2$ we have 
		\begin{subequations}
			\begin{align}
				\frac{8}{(k-r_2)(r_2-r_1+2)(r_1+2)} &= \frac{8}{(r_2+2-r_2)(r_2-r_1+2)(r_1+2)}\\
				&= \frac{8}{(2)((r_2+2)-r_1)(r_1+2))}\\
				&=\frac{4}{(k-r_1)(r_1+2)}\le \frac{2}{k+1},
			\end{align}
			\label{eq:msii}
		\end{subequations}
		where the inequality follows from 
		the fact that $r_1\ge 2$ 
		and $k=r_2+2\ge r_1+3$.
		Now let $k \geq r_2+3$.
		Then
		\begin{subequations}
			\begin{align}
				&\qquad\frac{8}{(k-r_2)(r_2-r_1+2)(r_1+2)} - \frac{4}{(k-r_1)(r_1+2)}
				\nonumber
				\\
				&= \frac{4}{(r_1+2)}\left( \frac{2}{(k-r_2)(r_2-r_1+2)}-\frac{1}{k-r_1}\right)\\
				&=\frac{4}{(r_1+2)}\left( \frac{2(k-r_1)}{(k-r_2)(r_2-r_1+2)(k-r_1)}-\frac{(k-r_2)(r_2-r_1+2)}{(k-r_2)(r_2-r_1+2)(k-r_1)}\right)\\
				&=\frac{4}{(r_1+2)}\left( \frac{2(k-r_1) - (k-r_2)(r_2-r_1+2)}{(k-r_2)(r_2-r_1+2)(k-r_1)}\right).
				\label{eq:ms4}
			\end{align}
			\label{eq:ms4i}
		\end{subequations}
		Write $k = r_2+a$ where $a \geq 3$. 
		It is sufficient to show that the numerator of the right-hand side 
		of \cref{eq:ms4} is nonpositive. 
		To this end we have 
		\begin{subequations}
			\begin{align}
				2(k-r_1) - (k-r_2)(r_2-r_1+2) &= 2(r_2+a-r_1) - (r_2+a-r_2)(r_2-r_1+2)\\
				&=2a (r_2-r_1) - 2a -a(r_2-r_1)\\
				&= (2-a)(r_2-r_1)
				\leq 0.
			\end{align}
			\label{eq:ms5}
		\end{subequations}
		Combining \cref{eq:msii},
		\cref{eq:ms4i}, and 
		\cref{eq:ms5}
		we conclude that for $k \geq r_2 +2$ we have
		$$\frac{8}{(k-r_2)(r_2-r_1+2)(r_1+2)} \leq \frac{4}{(k-r_1)(r_1+2)} \leq \frac{2}{k+1}.$$
		This proves the base case.
		Now suppose that we have restarted $p$ times, $p\ge 2$, and for $k \geq r_p +2$ the following inequalities hold
		\begin{equation}
			\frac{2^{p+1}}{(k-r_p)(r_p-r_{p-1}+2)\dots (r_2-r_1+2)(r_1+2)} \leq \dots \leq \frac{4}{(k-r_1)(r_1+2)}\leq \frac{2}{(k+1)}.
		\end{equation} 
		Consider the iterations $k \geq r_{p+1} +2$ where $r_{p+1}$ is the iteration of the $p+1$ restart.
		If we apply \cref{eqn:FistaPropRP} and \cref{eqn:FistaPropRi} to \cref{eqn:tkBNDS} we obtain, 
		$$f(x_k)-f(x^*) \leq \left(\frac{2^{p+2}}{(k-r_{p+1})\prod_{i=1}^{p+1} (r_i-r_{i-1}+2))} \right)^2 \frac{L}{2}(x_0-x^*)^2.$$
		We are concerned with iterations $k \geq r_{p+1}+2$ since those would be generated after the $p+1$ restart. At $k = r_{p+1}+2$ we have
		\begin{subequations}
			\begin{align}
				\frac{2^{p+2}}{(k-r_{p+1})\prod_{i=1}^{p+1} (r_i-r_{i-1}+2)} &= \frac{2^{p+2}}{(r_{p+1}+2-r_{p+1})\prod_{i=1}^{p+1} (r_i-r_{i-1}+2)}  \\
				&=\frac{2^{p+2}}{2(r_{p+1}-r_{p}+2)\prod_{i=1}^{p} (r_i-r_{i-1}+2)}  \\
				&=    \frac{2^{p+1}}{(k-r_{p})\prod_{i=1}^{p} (r_i-r_{i-1}+2)} . 
			\end{align}
		\end{subequations}
		Observe that this is exactly what we obtain when we have $p$ restarts.
		Therefore, by the inductive hypothesis, we know that for $k = r_{p+1}+2$ we satisfy the upper bound at $x_{r_{p+1}+2}$. For $k \geq r_{p+1} +3$ we examine 
		\begin{equation}
			\frac{2^{p+2}}{(k-r_{p+1})\prod_{i=1}^{p+1} (r_i-r_{i-1}+2))} -  \frac{2^{p+1}}{(k-r_{p})\prod_{i=1}^{p} (r_i-r_{i-1}+2))} .
		\end{equation}
		Proceeding similarly to the arguments in the base case
		(see \cref{eq:ms4i} and \cref{eq:ms5}), we conclude that it is sufficient to examine the sign of
		$2(k-r_p) - (k-r_{p+1})(r_{p+1}-r_p +2)$.
		Write $k = r_{p+1} + a$ where $a \geq 3$,  we now examine
		\begin{subequations}
			\begin{align}
				&\qquad 2(k-r_p) - (k-r_{p+1})(r_{p+1}-r_p +2)
				\nonumber
				\\
				&=2(r_{p+1} + a-r_p) - (r_{p+1} + a-r_{p+1})(r_{p+1}-r_p +2) \\
				&= 2(r_{p+1} + a-r_p) - (a)(r_{p+1}-r_p +2)\\
				&=2a + 2(r_{p+1}-r_p) - 2a -a(r_{p+1}-r_p)\\
				&= (2-a)(r_{p+1}-r_p)\leq 0.
			\end{align}
		\end{subequations}
		Hence we conclude for $k\geq r_{p+1}+2$
		$$\frac{2^{p+2}}{(k-r_{p+1})\prod_{i=1}^{p+1} (r_i-r_{i-1}+2))} \leq  \frac{2^{p+1}}{(k-r_{p})\prod_{i=1}^{p} (r_i-r_{i-1}+2))}\le \frac{2}{k+1}, $$
		where the second inequality follows from the inductive hypothesis.
		Altogether we have shown that $(\forall p\ge 2)$, $(\forall k\in \{r_p+2, \ldots,r_{p+1}+1\} )$ we have \begin{subequations}
			\begin{align}
				f(x_k)  - f(x^*) &\leq \left(\frac{2^{p+1}}{(k-r_p)(r_p-r_{p-1}+2)\dots(r_2-r_1+2)(r_1+2)}\right)^2\frac{L}{2}(x_0-x^*)^2 
				\\
				&\leq \frac{2L}{(k+1)^2}(x_0-x^*)^2.
			\end{align}  
		\end{subequations}
		The proof is complete.
		\hfill{$\blacksquare$}
		\section{Numerical example: Simple Quadratic}
		\label{sec:D}
		In this and the upcoming appendices, we provide some numerical examples using the gradient-based restart strategy. The examples indicate that choosing $x_{k+1}$ as the new initial point after the restart does not hamper the algorithm.\\
		\\
		The first example is a simple quadratic  $f(x) = \frac{1}{2}x^TQx-q^Tx$, where $Q$ is an $n \times n$ positive definite matrix obtained by $Q = Q_0 + Q_0^T + 50 \Id$ where $Q_0$ is generated using a $U[0,1]$ distribution, and $q \in \R^n$ is randomly generated from a standard normal distribution. The Lipschitz constant $L$ is given by the maximum eigenvalue of $Q.$ For this experiment we set $n = 500$.
		\begin{figure}[h]
			\centering
			\includegraphics[width = \textwidth]{ 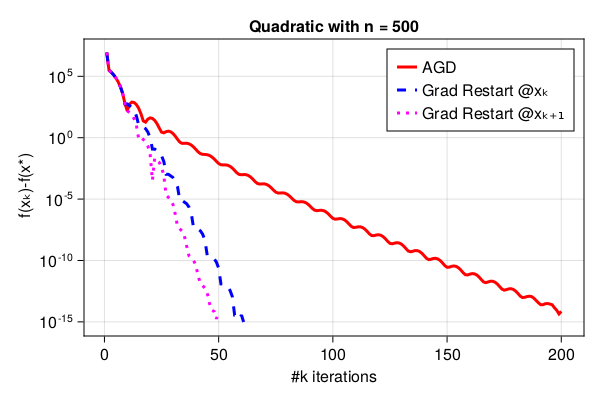}
			\caption{A \texttt{Julia} plot.
				A comparison of
				the performances of 
				the restart scheme in Algorithm~1 at $x_{k+1}$ (the pink dotted curve),
				the restart scheme in \cite{O'Donoghue2015Jun}
				at $x_k$ (the blue dashed curve)
				and the classical AGD 
				(the solid red curve)
				when applied to minimize simple quadratic.}
			\label{fig:AGD_Quad}
		\end{figure}
		We can see in \cref{fig:AGD_Quad} that keeping $x_{k+1}$ at the restart yields a modest improvement over keeping $x_k$.  Our theory for the $n=1$ case required keeping $x_{k+1}$ rather than $x_k$.
		
		\section{Huber Regression}
		\label{sec:E}
		Another numerical example is the following problem of Huber Regression. Define,
		\begin{equation} \psi_\tau (x) = 
			\begin{cases}
				x^2, & |x| \leq \tau\\
				2\tau x - \tau^2 & x\geq \tau\\
				-2\tau x - \tau^2 & x\leq - \tau.
			\end{cases}
		\end{equation}
		Given $A \in \R^{m \times n}$ and $y \in \R^m$ we consider the optimization problem,
		\begin{equation}
			\min_{x \in \R^n} \frac{1}{2} \sum_{i=1}^m \psi_\tau (a_ix - y_i)
		\end{equation}
		where $a_i$ is a row of $A.$ For this experiment we set $\tau = 0.5$, $m=300, n=50,$ and $A$ and $y$ were randomly generated using a standard normal distribution.
		\begin{figure}[h]
			\centering
			\includegraphics[width = \textwidth]{ 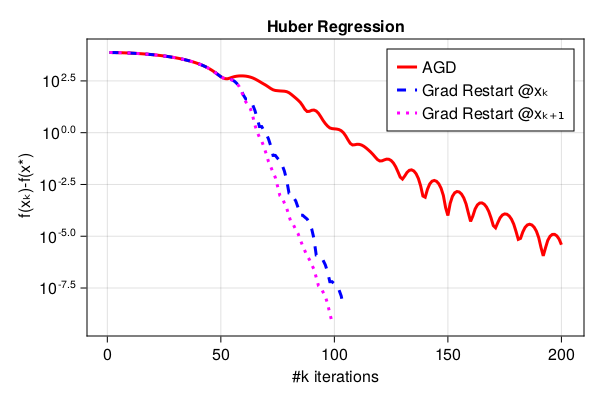}
			\caption{A \texttt{Julia} plot.
				A comparison of
				the performances of 
				the restart scheme in Algorithm~1 at $x_{k+1}$ (the pink dotted curve),
				the restart scheme in \cite{O'Donoghue2015Jun}
				at $x_k$ (the blue dashed curve)
				and the classical AGD 
				(the solid red curve)
				when applied to solve the Huber Regression problem.}
			\label{fig:AGD_Huber}
		\end{figure}
		In \cref{fig:AGD_Huber} it can be observed that restarting using $x_{k+1}$ offers slightly better performance.
		
		\section{The Hinder-Lubin Example Expanded}
		\label{sec:HL}
		
		Hinder and Lubin constructed a function \cite[Appendix D.4]{hinder2020generic} for which the restarts of O'Donoghue and Cand\'es performed poorly until the iterates of $(x_k)_\knn$ were close to the minimizer. Their objective function is defined as
		\begin{equation}
			f(x) = \sum_{i=1}^n i h_\delta (x_i) + \frac{\alpha}{2}||x||^2_2,
			\label{eq:hinderlubin}
		\end{equation}
		where
		\begin{equation}
			h_\delta(z) = \begin{cases}
				z^2/2, & z \geq -\delta\\
				-\delta z -\delta^2/2 & z<-\delta.
			\end{cases}
		\end{equation}
		Remark that $f: \R^n \rightarrow \R$ is $(n+\alpha)-$smooth and $\alpha$-strongly convex with a unique minimizer at $x = 0.$ The function $f$ is also separable, and therefore one can apply  Algorithm~1  along each coordinate. 
		We see that the restart
		scheme of  Algorithm~1 performs well. In most cases, we do not have knowledge of whether the function is separable or not but there are cases where running  Algorithm~1  in parallel along each coordinate has merit. We introduce a modified function obtained from \eqref{eq:hinderlubin} that is not separable. Given a matrix $A\in \R^{m \times n}$, define
		\begin{equation}
			F(x) = f(x) + \gamma \sum_{i=1}^m \left(\scal{a_i}{x} + \sqrt{\scal{a_i}{x}^2+1}\right)
			\label{eq:mhinderlubin}
		\end{equation}
		where $f$ is the Hinder and Lubin function defined in 
		\cref{eq:hinderlubin}, $a_i$ are the rows of $A$, and $\gamma \in \R$. For the experiment we chose $m = 110,n=100,$ and set $\delta = \alpha = \gamma = 10^{-4}.$ Note that $F$ is now $\left(n + \alpha + \gamma \sum_{i=1}^m \norm{a_i}\right)-$smooth. In the experiment we compare AGD, AGD with the gradient restarts, and  Algorithm~1  running along each coordinate. 
		\begin{figure}[ht]
			\centering
			\includegraphics[width = \textwidth]{ 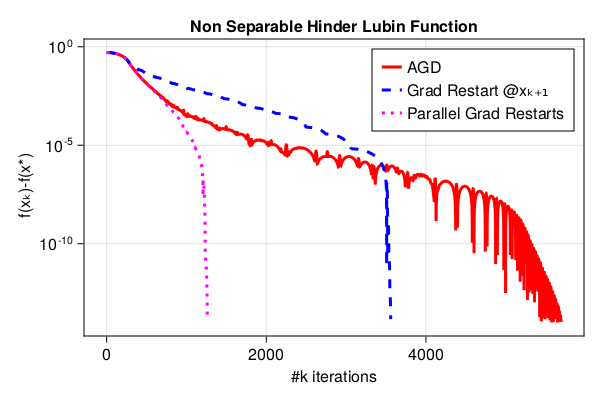}
			\caption{A \texttt{Julia} plot.
				A comparison of
				the performances of 
				the parallel gradient restart scheme using  Algorithm~1 for each coordinate  (the pink dotted curve),
				the restart scheme in \cite{O'Donoghue2015Jun}
				at $x_{k+1}$ (the blue dashed curve)
				and the classical AGD 
				(the red curve)
				when applied to solve the  modified Hinder-Lubin function
				\cref{eq:mhinderlubin}. As the plots reflect, while the objective function is not separable, separate restarts still improve performance.}
			\label{fig:HiLuMod}
		\end{figure}
		In \cref{fig:HiLuMod} it can be seen that the gradient based restarts perform poorly until the iterates get close to the minimizer. On the other hand, restarting along each coordinate performs much better even though $F$ is not separable. 
	\end{document}